\documentclass[11pt,reqno]{amsart}

\usepackage[T1]{fontenc}  % codifica font
\usepackage[utf8]{inputenc}  % lettere accentate
\usepackage[english]{babel} % document language

\usepackage[a4paper,hmargin=3cm,vmargin=3.5cm,heightrounded]{geometry}

\usepackage{xcolor}
\usepackage[normalem]{ulem}
\newcommand\tsout{\bgroup\markoverwith{\textcolor{red}{\rule[0.5ex]{2pt}{1.4pt}}}\ULon}
\newcommand{\stkout}[1]{\ifmmode\text{\tsout{\ensuremath{#1}}}\else\tsout{#1}\fi}

\RequirePackage[colorlinks=True,linkcolor=blue,urlcolor=blue]{hyperref} %% for coloring bibliography citations and linked URLs
\RequirePackage{graphicx} %% for including figures
\usepackage{mathrsfs} % implements \mathscr
\usepackage[font=scriptsize,labelfont={scriptsize}]{subfig} % implements \subfloat
\usepackage{bbm} % implements \mathbbm
\usepackage{enumerate}
\usepackage{enumitem,kantlipsum}
\usepackage{caption} % captions personalization
\usepackage{comment}
\usepackage{cite}
\usepackage{amssymb, amsmath}
\usepackage{nicefrac}
\usepackage{dsfont}
\usepackage{soul}

\numberwithin{equation}{section}
\setlist[enumerate]{leftmargin=*,label=(\roman*)}

\theoremstyle{plain}
\newtheorem{theorem}{Theorem}[section]

\newtheorem{lemma}[theorem]{Lemma}
\newtheorem{corollary}[theorem]{Corollary}

\theoremstyle{remark}

\newtheorem{example}[theorem]{Example}

\newcommand{\la}{\lambda}
\newcommand{\ep}{\varepsilon}

\newcommand{\de}{\delta}
\newcommand{\si}{\sigma}
\newcommand{\ga}{\gamma}

\newcommand{\Om}{\Omega}

\newcommand{\N}{\mathbb{N}}

\newcommand{\Q}{\mathbb{Q}}
\newcommand{\R}{\mathbb{R}}

\renewcommand{\P}{\mathbb{P}}
\newcommand{\E}{\mathbb{E}}

\newcommand{\cB}{\mathcal{B}}

\newcommand{\cF}{\mathcal{F}}

\renewcommand{\d}{{\rm d}}

\newcommand{\cpl}{{\rm Cpl}}

\newcommand{\one}{\mathbbm{1}} % mathbbm version

\newcommand{\deq}{\stackrel{\rm d}{=}}

\begin{document}

\title[Aggregation of Random Variables under PRD]{Absolute Continuity of Monotone Aggregations under Positive Regression Dependence}

\author{Ben Goldys}
\address{School of Mathematics and Statistics, The University of Sydney}
\email{beniamin.goldys@sydney.edu.au}

\author{Max Nendel}
\address{Department of Statistics and Actuarial Science, University of Waterloo}
\email{mnendel@uwaterloo.ca}

\date{\today}
\thanks{The first-named author is funded by the Australian Research Council, Grant No.\ DP240100781.\ The second-named author gratefully acknowledges financial support through the Deutsche Forschungsgemeinschaft (DFG, German Research Foundation) -- SFB 1283/2 2021 -- 317210226 and the Natural Sciences and Engineering Research Council of Canada via Discovery Grant no.\ RGPIN-2025-04219.\ Moreover, the second-named author would like to express his gratitude for the financial support and hospitality of the Sydney Mathematical Research Institute (SMRI)}

\begin{abstract}
   In this paper, we provide a sufficient condition for the absolute continuity of one-dimensional push-forwards of dependent random vectors.\ Suppose that $X$ has an absolutely continuous distribution and that the conditional distribution of an $\mathbb{R}^d$-valued random vector $Y$ given $X=x$ is nondecreasing in $x\in \R$ in the usual stochastic order.\ For Borel maps $g\colon \mathbb{R}\times\mathbb{R}^d\to\mathbb{R}$ satisfying a coordinatewise monotonicity condition in $Y$ and a uniform lower-increment condition in $X$, we prove that $g(X,Y)$ has an absolutely continuous distribution.\ The result requires neither independence nor a joint density, and allows the marginal law of $Y$ to be completely arbitrary.\ Moreover, the result remains valid if $\R^d$ is replaced by an arbitrary measurable space endowed with a reflexive binary relation.\ We discuss consequences for monotone risk aggregation and extensions of the familiar regularization by convolution beyond independent random variables.

    \smallskip
    \noindent \emph{Key words:}\ Absolute continuity, positive regression dependence, risk aggregation, reflexive relation.
    \smallskip
		
    \noindent \emph{MSC 2020 Classification:}\ 
    Primary 60E05; 60E15; Secondary 28A50; 49Q22.
\end{abstract}

\maketitle

\section{Introduction}

Aggregations of dependent random quantities are ubiquitous in probability, statistics, finance, insurance, and operations research.\ A basic regularity question is whether a scalar output of the form
\[
        g(X,Y)\quad \text{with a random variable }X\in \R\text{ and a random vector }Y\in \R^d
\]
has a density when one coordinate, here $X$, already has an absolutely continuous distribution.\ If $X$ is independent of $Y$, absolute continuity of sums of the form $X+a(Y)$ with a Borel measurable function $a\colon \R^d\to \R$ follows from the fact that convolution with an absolutely continuous distribution preserves absolute continuity.\ In this case, the density of $Z=X+a(Y)$ is given by
\[
 f_Z(z)=\int_{\R^d}f_X\big(z-a(y)\big)\, \P\circ Y^{-1}(\d y)\quad \text{for all }z\in \R,
\]
where $f_X$ denotes the density of $X$ and $\P\circ Y^{-1}$ denotes the distribution of the random vector $Y$.

However, if the random vector $Y$ depends on the random variable $X$ and for general nondecreasing Borel measurable aggregation functions $g$, the question whether $g(X,Y)$ has a density is less clear.\ For negative dependence between $X$ and $Y$, absolute continuity of $g(X,Y)$ is, in general, not satisfied.\ For example, if 
$$d=1, \quad Y=-X,\quad  \text{and} \quad g(x,y)=x+y\quad\text {for all }x,y\in \R,$$ we get $g(X,Y)\equiv 0$.\ Therefore, under negative dependence, absolute continuity cannot be expected.\ The question, whether absolute continuity of $g(X,Y)$ can be guaranteed if $Y$ depends positively on $X$ is less clear.\

The main result of this paper, Theorem \ref{thm.main}, answers this question positively if $Y$ is \textit{positively regression dependent} (PRD) on $X$, i.e., the conditional law %$\mathcal L(Y\,|\, X=x)$
of $Y$ given $X=x$ is nondecreasing in the usual stochastic order $\leq_{\rm st}$ as a function of $x\in \R$, cf.\ \cite{zbMATH03141657,zbMATH03236593}, and $g$ is sufficiently increasing in the first variable.\ We refer to \cite{zbMATH05043275} for a related result in the context of L\'evy processes with drift.\ There, the authors show that, for a function $a\colon \R\to \R$ with bounded continuous derivative and a L\'evy process $(Y_t)_{t\geq 0}$ with infinite jump measure, the random variable
\[
X_1=x+\int_0^1 a(X_s)\, \d s+Y_1
\]
has a density w.r.t.\ the Lebesgue measure if $a$ is strictly increasing in a neighborhood of the initial value $x\in \R$.  

We point out that the assumption that $Y$ is PRD on $X$ is stronger than positive quadrant dependence (PQD) in the bivariate case, cf.\ \cite[Lemma 4]{zbMATH03236593}.\ Moreover, in the bivariate case, $X$ and $Y$ are \textit{conditionally increasing} (CI), i.e., positively regression dependent on each other if and only if the copula of the random vector $(X,Y)$ is concave in each variable if the other variable is fixed, cf. \cite[Theorem 3.7]{zbMATH02228221}.\ We also refer to \cite{zbMATH02167809} for further characterizations of CI for Archimedean copulae and to \cite{zbMATH08027860} for a discussion of positive dependence in the context of insurance pricing.\ Observe that comonotonicity, which usually serves as an extreme benchmark for positive dependence, is a particular form of PRD.\ However, comonotonic couplings are typically singular, so that absolute continuity of the aggregate cannot simply be drawn as a consequence of joint absolute continuity.\ Despite the potential singularity of the joint distribution, Theorem \ref{thm.main} still ensures the existence of a density for the aggregate random variable $g(X,Y)$.

Also the condition that $g$ is sufficiently increasing in $X$ cannot be simply omitted.\ If $g$ is, for example, constant in the first variable, absolute continuity of $g(X,Y)$ cannot be expected.\ For example, if $Y\equiv0$ and $g$ is constant in the first variable, it follows that $g(X,Y)\equiv g(0,0)$ does not even have a  continuous cumulative distribution function.\

Another subtlety lies in the fact that there are continuous distribution functions that are not absolutely continuous, the most prominent example being the Cantor function or devil's staircase.\ Already in the case where $g$ is independent of $Y$, strict monotonicity of $g$ in the first variable is not enough to ensure that $g(X,Y)$ is absolutely continuous.\ If, for example, $g$ is independent of the second variable and given as the inverse of the function $\frac12\big(x+c(x)\big)$ on the interval $(0,1)$, where $c$ is the Cantor function, it follows that $g(X)$ is \textit{not} absolutely continuous even if $X$ is, for example, uniform on $(0,1)$, see Example \ref{ex.counterexample} for the details.\ In order to exclude these cases, one therefore has to assume that $g\colon \R\times \R^d\to \R$ is Borel measurable with
\begin{equation}\label{eq.aggregation.property.intro}
g(x_2,y_2)-g(x_1,y_1)\geq h(x_2)-h(x_1)
\end{equation}
for all $x_1,x_2\in \R$ with $x_1\leq x_2$ and  $y_1,y_2\in \R^d$ with $y_1\leq y_2$, where $h\colon \R\to \R$ is nondecreasing and bijective with absolutely continuous inverse.\ Observe that Condition \eqref{eq.aggregation.property.intro} implies that $g$ is nondecreasing in both variables.

In the following section, we state and prove the main result, Theorem \ref{thm.main}, for a general measurable space, endowed with a reflexive binary relation, instead of $\R^d$ and draw several consequences from it.

\section{Main result}
Let $(\Om,\cF,\P)$ be a probability space and $(S,\mathscr S)$ be a measurable space, endowed with an arbitrary reflexive binary relation $\precsim$.\ We refer to \cite[Section I.1]{zbMATH03796014} and \cite[Chapter 1]{zbMATH01748069} for a survey on the less general, yet in most applications standard, concept of a partial order, i.e., a reflexive, transitive, and antisymmetric relation.\ Moreover, let $X\colon \Om\to \R$ be a random variable, where $\R$ is endowed with the Borel $\si$-algebra $\cB$, and $Y\colon \Om\to S$ be an $\cF$-$\mathscr S$-measurable map.\ For a random variable $Z\colon \Om\to \R$, we denote by $F_Z\colon \R\to [0,1]$ the distribution function of $Z$, i.e.,
\[
F_Z(a):=\P(Z\leq a)\quad\text{for all }a\in \R.
\]
A map $g\colon \R\times S\to \R$ is called \textit{nondecreasing} if $$g(x_1,y_1)\leq g(x_2,y_2)\quad \text{for all }x_1,x_2\in \R\text{ with }x_1\leq x_2\text{ and }y_1,y_2\in S\text{ with }y_1\precsim y_2.$$ For functions that are independent of the first variable, this also yields a notion of a nondecreasing map $f\colon S\to \R$.\ Since $\precsim$ is reflexive, a function $f\colon \R\to \R$ is nondecreasing in the sense of the above definition (viewed as a function that is independent of the second variable) if and only if it is nondecreasing in the usual sense.

We say that $Y$ is \textit{positively regression dependent} (PRD) on $X$ if there exists a stochastic kernel $\pi\colon \R\times  \mathscr S\to [0,1]$ such that
\begin{equation}\label{eq.kernelrep}
 \E\big[g(X,Y)\big]=\int_{\R} \int_S g(x,y)\,\pi(x,\d y)\,F_X(\d x)
\end{equation}
 for all bounded $\cB\otimes \mathscr S$-measurable maps $g\colon \R\times S\to \R$ and 
\begin{equation}\label{eq.def.prd}
\pi(x_1,\,\cdot\,)\leq_{\rm st} \pi(x_2,\,\cdot\,) \quad \text{for all }x_1,x_2\in \R\text{ with }x_1\leq x_2,
\end{equation}
 i.e., for all $x_1,x_2\in \R$ with $x_1\leq x_2$ and 
 every nondecreasing bounded $\mathscr S$-measurable function $f\colon S\to \R$,
 \[
 \int_{S} f(y)\,\pi(x_1,\d y)\leq \int_{S} f(y)\,\pi(x_2,\d y).
 \]
In other words, $Y$ is PRD on $X$ if the conditional law of $Y$ given $X=x$ is nondecreasing in $x\in \R$ with respect to the stochastic order induced by the reflexive relation $\precsim$ on $S$.\ In the case $S=\R^d$ with the coordinatewise order, the stochastic order $\leq_{\rm st}$ corresponds to the \textit{usual stochastic order}.\ We refer to \cite{zbMATH03236593} for a broad discussion of the concept of positive regression dependence and to \cite[Chapter 6]{zbMATH05076652} for an overview on multivariate stochastic orders such as the usual stochastic order $\leq_{\rm st}$ in the case $S=\R^d$.\

We point out that the existence of the stochastic kernel $\pi$ is part of the definition of positive regression dependence.\ If $S$ is a Borel space, the existence of a stochastic kernel, satisfying \eqref{eq.kernelrep} for all $\cB\otimes \mathscr S$-measurable maps $g\colon \R\times S\to \R$, follows from \cite[Theorem 3.4 (i)]{zbMATH07301635}.\ In this case, for every $B\in \mathscr S$, the map $\R\to [0,1],\, x\mapsto \pi(x,B)$ is $\P\circ X^{-1}$-a.s.\ uniquely specified, cf.\ \cite[Theorem 3.4 (ii)]{zbMATH07301635}.

The following theorem is the main result of this paper.

\begin{theorem}\label{thm.main}
Let $g\colon \R\times S\to \R$ be $\cB\otimes \mathscr S$-measurable with
\begin{equation}\label{eq.aggregation.property}
g(x_2,y_2)-g(x_1,y_1)\geq h(x_2)-h(x_1)
\end{equation}
for all $x_1,x_2\in \R$ with $x_1\leq x_2$ and  $y_1,y_2\in S$ with $y_1\precsim y_2$, where $h\colon \R\to \R$ is nondecreasing and bijective with absolutely continuous inverse.\ If $F_X$ is absolutely continuous and $Y$ is PRD on $X$, then the distribution function $F_{g(X,Y)}$ of $g(X,Y)$ is absolutely continuous.\ In particular, $g(X,Y)$ admits a density w.r.t.\ the Lebesgue measure on $\cB$.
\end{theorem}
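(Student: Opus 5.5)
The plan is to express $P(a<Z\le b)$, where $Z:=g(X,Y)$, through the kernel $\pi$. The quantity to control is the total mass $\sum_i \P(a_i<Z\le b_i)$ for finitely many disjoint intervals $(a_i,b_i]$ with small total length. We will bound it by comparing two conditional tail functions that differ by a shift in the $h$-scale. We then control the shifted differences using only monotonicity in $x$ and integrability of the density of $h(X)$.

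\emph{Step 1 (conditional tails are monotone).} For $b\in\R$ put
\[
\psi_b(x):=\pi\big(x,\{y\in S: g(x,y)>b\}\big).
\]
This is Borel measurable in $x$ by the usual measurability of $x\mapsto\int f(x,y)\,\pi(x,\d y)$ for product-measurable $f$. By \eqref{eq.kernelrep} we have $\P(Z>b)=\int\psi_b\,\d F_X$, and hence
\[
\P(a<Z\le b)=\int(\psi_a-\psi_b)\,\d F_X .
\]
Reflexivity of $\precsim$ together with \eqref{eq.aggregation.property} shows that $g(x,\cdot)$ is nondecreasing, so $y\mapsto \one_{\{g(x,y)>b\}}$ is a bounded nondecreasing function. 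Combining this with \eqref{eq.def.prd} and the monotonicity of $g$ in $x$, we get that $\psi_b$ is nondecreasing.

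\emph{Step 2 (key shift inequality).} Let $T_\de(x):=h^{-1}(h(x)+\de)$ for $\de\ge0$, so that $h(T_\de x)-h(x)=\de$. Fix $x$ and $y$. If $g(x,y)>a$, then \eqref{eq.aggregation.property} applied with $y_1=y_2=y$ gives
\[
g(T_{b-a}x,y)\ge g(x,y)+(b-a)>b .
\]
Hence $\{g(x,\cdot)>a\}\subseteq\{g(T_{b-a}x,\cdot)>b\}$, and the latter set is an up-set in $S$. Applying \eqref{eq.def.prd} between $x$ and $T_{b-a}x\ge x$ therefore yields
\[
\psi_a(x)\le \psi_b\big(T_{b-a}x\big).
\]
It follows that $0\le\psi_{a}(x)-\psi_{b}(x)\le \psi_{b}(T_{b-a}x)-\psi_{b}(x)$. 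In addition, for disjoint intervals we have the pointwise bound $\sum_i(\psi_{a_i}-\psi_{b_i})(x)=\pi\big(x,\{g(x,\cdot)\in\bigcup_i(a_i,b_i]\}\big)\le1$.

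\emph{Step 3 (change of variables and the integrability split).} This is the main point. Set $U:=h(X)$. Then $F_U=F_X\circ h^{-1}$ is a composition of nondecreasing absolutely continuous functions. Both factors have the Lusin (N) property, so $F_U$ is continuous, monotone and has property (N), hence is absolutely continuous with some density $\rho\in L^1$. Write $\tilde\psi_b:=\psi_b\circ h^{-1}$, which is nondecreasing with values in $[0,1]$, and put $\de_i:=b_i-a_i$. Given $\ep>0$, choose $M$ such that $\int_{\{\rho>M\}}\rho\,\d u<\ep$. On $\{\rho>M\}$ we use the pointwise bound $\le1$ from Step 2. On $\{\rho\le M\}$ we use Step 2 and the elementary estimate $\int_\R\big(\tilde\psi(u+\de)-\tilde\psi(u)\big)\,\d u\le\de$, valid for nondecreasing $\tilde\psi$ with values in $[0,1]$. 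Together these give
\[
\sum_i\P(a_i<Z\le b_i)=\int\sum_i(\tilde\psi_{a_i}-\tilde\psi_{b_i})\,\rho\,\d u\le \ep+M\sum_i\de_i .
\]
So the left-hand side is below $2\ep$ as soon as $\sum_i\de_i<\ep/M$, which is precisely absolute continuity of $F_Z$. The existence of a density then follows from Radon--Nikodym.

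I expect the main obstacle to be exactly the summation in Step 3. Using $L^1$-continuity of translations separately on each interval would give moduli $\omega(\de_i)$ that do not sum well. The device that I expect to make it work is to combine the truncation of $\rho$ at level $M$ with the uniform pointwise bound $\sum_i(\psi_{a_i}-\psi_{b_i})\le1$ coming from disjointness. One should also double-check the measurability of $\psi_b$ and the property-(N) argument for $F_U$.
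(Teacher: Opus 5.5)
Your proof is correct, and it takes a genuinely different route from the paper.

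\textbf{The paper's route.} The paper first reduces to $h=\mathrm{id}$. It then reduces to $S=\R$ and $g(x,y)=x+y$, by checking that $g(X,Y)-X$ is again PRD on $X$. Next it realizes $Y$ as $f(X,U)$, where $f$ is a conditional quantile nondecreasing in $x$ and $U$ is an independent uniform variable on an enlarged space. The argument is then pathwise. For each fixed $u$, the fiber $\{x\colon x+f(x,u)\in(a,b]\}$ is an interval of length at most $b-a$. So for disjoint $(a_k,b_k]$ the union of fibers has Lebesgue measure below $\de$, and the $\ep$--$\de$ form of absolute continuity of $F_X$ applies for every $u$ before integrating in $u$.

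\textbf{Your route.} You stay on the original space $(S,\precsim)$ and build no coupling. You use the kernel only through the monotone conditional tails $\psi_b$. Your shift inequality plus the telescoping estimate gives $\int_\R(\tilde\psi_a-\tilde\psi_b)\,\d u\le b-a$. This is exactly the $u$-average of the paper's fiber-length bound: in the reduced setting, $\int_\R(\psi_a-\psi_b)\,\d x=\int_0^1\la(I_{a,b}(u))\,\d u$. Since you only have this averaged bound, you cannot apply absolute continuity of $F_X$ directly. Truncating $\rho$ at level $M$ and combining it with the disjointness bound $\sum_i(\tilde\psi_{a_i}-\tilde\psi_{b_i})\le 1$ is precisely the right substitute, and that step is sound.

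\textbf{What each approach buys.} Your argument avoids both reductions, the measurability of the quantile, and the enlargement of the probability space. The paper's argument gives a sharper per-$u$ geometric picture and needs no truncation step. Both yield the same quantitative by-product: if $h(X)$ has a density bounded by $M$, then $g(X,Y)$ has a density bounded by $M$.

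Finally, your Lusin (N)/Banach--Zarecki argument for the absolute continuity of $F_{h(X)}$ is correct but heavier than necessary. The direct $\ep$--$\de$ composition argument of Lemma~\ref{lem.preparation} suffices.
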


Note that \eqref{eq.aggregation.property} is equivalent to
\[
 g(x,y_1)\leq g(x,y_2)\quad\text{for all }x\in \R\text{ and }y_1,y_2\in S\text{ with }y_1\precsim y_2
\]
and
\[
g(x_2,y)-g(x_1,y)\geq h(x_2)-h(x_1)\quad\text{for all }x_1,x_2\in \R\text{ with }x_1\leq x_2\text{ and }y\in S.
\]
In particular, \eqref{eq.aggregation.property}  implies that $g$ is nondecreasing.

\begin{example}\label{ex.counterexample}
We point out that condition \eqref{eq.aggregation.property} in Theorem \ref{thm.main} cannot be weakened to requiring that the inverse of $h$ is only uniformly continuous, i.e.,
 \[
 g(x_2,y_2)-g(x_1,y_1)\geq  h(x_2)-h(x_1)
 \]
 for all $x_1,x_2\in \R$ with $x_1\leq x_2$ and  $y_1,y_2\in S$ with $y_1\precsim y_2$, where $h\colon \R\to \R$ is nondecreasing and bijective with uniformly continuous inverse, already in the case where $g$ does not depend on the second variable.
 
As a counterexample, take $g(x,y):=h(x)$ for $x\in \R$, where $h$ is the inverse of $\frac{1}2\big(x+c(x)\big)$ for $x\in (0,1)$ with $c$ being the Cantor function and $h(x):=x$ for $x\in \R\setminus(0,1)$.\ Since the Cantor function is uniformly continuous, it follows that $$h^{-1}(x)=\begin{cases}\frac{1}2\big(x+c(x)\big),&x\in (0,1),\\ x, &x\in \R\setminus(0,1),\end{cases}$$ is uniformly continuous.\ Then, choosing $X$ to be uniform on $(0,1)$, it follows that $F_{h(X)}(a)=\frac{1}2\big(a+c(a)\big)$ for $a\in (0,1)$, which is not absolutely continuous, since the Cantor function fails to be absolutely continuous.\
\end{example}

We prepare the proof of Theorem \ref{thm.main} with the following auxiliary result, cf.\ \cite[Corollary 5.4.4]{bogachev-2007}. For the sake of a self-contained exposition, we provide a short proof.

\begin{lemma}\label{lem.preparation}
 Let $F_X$ be absolutely continuous and $h\colon \R\to \R$ be nondecreasing and bijective with absolutely continuous inverse.\ Then, $F_{h(X)}$ is absolutely continuous.
\end{lemma}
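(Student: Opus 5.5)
We compute the distribution function of $h(X)$ directly and reduce the claim to the fact that a composition of absolutely continuous functions, with nondecreasing inner function, is absolutely continuous. Since $h$ is nondecreasing and bijective, it is strictly increasing and continuous, and so is its inverse $h^{-1}$. Hence $\{h(X)\le a\}=\{X\le h^{-1}(a)\}$ for every $a\in\R$, which gives
\[
F_{h(X)}(a)=F_X\big(h^{-1}(a)\big)\quad\text{for all }a\in\R .
\]
It therefore suffices to show that $F_X\circ h^{-1}$ is absolutely continuous on $\R$. Here absolute continuity is understood on compact intervals, or equivalently for the induced measure: the law of $h(X)$ is absolutely continuous w.r.t.\ the Lebesgue measure.

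I would prove this through measures. Let $\la$ be the Lebesgue measure, and take a Borel set $N$ with $\la(N)=0$; we need $\P(h(X)\in N)=0$. Write
\[
\P\big(h(X)\in N\big)=\P\big(X\in h^{-1}(N)\big).
\]
Since $F_X$ is absolutely continuous, it is enough to check that $\la\big(h^{-1}(N)\big)=0$. Note that $h^{-1}(N)$ is the image of $N$ under the map $h^{-1}$.

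The key step is the Lusin $(N)$-property. An absolutely continuous, nondecreasing function $\phi=h^{-1}$ maps Lebesgue null sets to null sets. The argument restricts to $N\cap[-n,n]$ and fixes $\ep>0$. Let $\de>0$ be given by the absolute continuity of $\phi$ on $[-n,n]$. Cover $N\cap[-n,n]$ by countably many disjoint open intervals $(a_k,b_k)\subseteq(-n-1,n+1)$ with total length less than $\de$. Then $\phi(N\cap[-n,n])$ is covered by the intervals $(\phi(a_k),\phi(b_k))$. By monotonicity these intervals are disjoint, and their total length satisfies
\[
\sum_k\big(\phi(b_k)-\phi(a_k)\big)<\ep .
\]
For countably many intervals, this follows by applying the finite definition to every finite subfamily and then passing to the limit. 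Letting $\ep\downarrow0$ and taking the union over $n$ gives $\la(\phi(N))=0$.

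Combining the steps, the law of $h(X)$ vanishes on Lebesgue null sets. Hence it has a density, and $F_{h(X)}$ is absolutely continuous. The main obstacle is the null-set step: making the passage from the finite-interval definition of absolute continuity to countable covers rigorous, and using monotonicity to keep the image intervals disjoint and the estimate clean. Alternatively, one can cite the Lusin property directly from \cite[Corollary 5.4.4]{bogachev-2007}.
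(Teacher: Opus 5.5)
Your proof is correct, but it takes a different route from the paper's.

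\textbf{The paper's route.} The paper never leaves the level of finite interval sums. It uses $F_{h(X)}=F_X\circ h^{-1}$ and the fact that the increasing map $h^{-1}$ sends finitely many disjoint intervals $(a_k,b_k]$ to disjoint intervals $(h^{-1}(a_k),h^{-1}(b_k)]$. It then chains the $\delta$ from the absolute continuity of $h^{-1}$ with the $\gamma$ from that of $F_X$. In effect, it proves directly that an absolutely continuous function composed with a nondecreasing absolutely continuous function is absolutely continuous.

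\textbf{Your route.} You pass to measures and reduce the claim to the Lusin $(N)$-property of $h^{-1}$. This forces you to extend the finite-interval definition to countable covers by open sets. You also need the equivalence between absolute continuity of a distribution function and absolute continuity of its law with respect to $\lambda$, and you use it twice:
\begin{itemize}
\item once to know that $\lambda$-null sets are $\P\circ X^{-1}$-null;
\item once, via Radon--Nikodym, to get back from the law of $h(X)$ to $F_{h(X)}$.
\end{itemize}

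\textbf{What each buys.} Your argument uses heavier machinery to reach the same conclusion. In exchange, it isolates the actual mechanism: the pushforward of an absolutely continuous law under a Borel map $h$ is absolutely continuous as soon as $\lambda(h^{-1}(N))=0$ for every $\lambda$-null set $N$. It also only requires $h^{-1}$ to be absolutely continuous on compact intervals. The paper's chaining, by contrast, uses a single $\delta$ valid on all of $\R$.

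\textbf{Two small slips.} Neither affects the conclusion.
\begin{itemize}
\item The $\delta$ should come from the absolute continuity of $\phi$ on $[-n-1,n+1]$, not $[-n,n]$, since your covering intervals lie in $(-n-1,n+1)$.
\item Passing to the limit over finite subfamilies only gives $\sum_k(\phi(b_k)-\phi(a_k))\le\varepsilon$, not a strict inequality.
\end{itemize}
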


\begin{proof}
 For $a,b\in \R$ with $a<b$, let
\[
I_{a,b}:=\big\{x\in \R\colon h(x)\in (a,b]\big\}=\big(h^{-1}(a),h^{-1}(b)\big].
\] 
Since, for $a_1,b_1,a_2,b_2\in \R$ with $a_1<b_1\leq a_2< b_2$, the intervals $I_{a_1,b_1}$ and $I_{a_2,b_2}$ are disjoint due to the strict monotonicity of $h$, it follows that $F_{h(X)}=F_X\circ h^{-1}$ is absolutely continuous.\ Indeed, since $F_X$ is absolutely continuous, for all $\ep>0$, there exists some $\ga>0$ such that $$\sum_{k=1}^n{F_X(d_k)-F_X(c_k)}<\ep\quad \text{if} \quad\sum_{k=1}^n{d_k-c_k}<\ga$$ for all $n\in \N$ and $c_k,d_k\in \R$ with $c_k<d_k\leq c_{k+1}$ for all $k=1,\ldots, n$ and $c_{n+1}:=\infty$.\ On the other hand, since $h^{-1}$ is absolutely continuous, there exists some $\de$ such that $$\sum_{k=1}^n{h^{-1}(b_k)-h^{-1}(a_k)}<\ga\quad\text{if} \quad\sum_{k=1}^n{b_k-a_k}<\de$$ for all $n\in \N$ and $a_k,b_k\in \R$ with $a_k<b_k\leq a_{k+1}$ for all $k=1,\ldots, n$ and $a_{n+1}:=\infty$.\ Choosing $c_k:=h^{-1}(a_k)$ and $d_k:=h^{-1}(b_k)$ for such $a_k,b_k\in \R$, the claim follows.

\end{proof}

We are now ready to prove the main result, Theorem \ref{thm.main}. 

\begin{proof}[Proof of Theorem \ref{thm.main}]
By Lemma \ref{lem.preparation}, we may w.l.o.g.\ assume that $h={\rm id}$ if we consider $h(X)$ instead of $X$, $g\big(h^{-1}(\,\cdot\,), \,\cdot\,\big)$ instead of $g$, and 
$\pi\big(h^{-1}(\,\cdot\,),\,\cdot\, \big)$ instead of $\pi$.

Then, the map $\overline g\colon \R\times S\to \R,\, (x,y)\mapsto g(x,y)- x$ is nondecreasing, and it follows that $$Z:=g(X,Y)- X \text{ is PRD on } X.$$ Indeed, let $\pi\colon \R\times  \mathscr S\to [0,1]$ be a stochastic kernel such that
\eqref{eq.kernelrep} and \eqref{eq.def.prd} hold.\ Then, for every nondecreasing bounded function $f\colon \R\to \R$,
\[
 \int_{S} f\big(\overline g(x_1,y)\big)\,\pi(x_1,\d y)\leq \int_{S} f\big(\overline g(x_1,y)\big)\,\pi(x_2,\d y)\leq \int_{S} f\big(\overline g(x_2,y)\big)\,\pi(x_2,\d y),
\]
where, in the second step, we used the reflexivity of the relation $\precsim$.\
Hence, choosing the stochastic kernel
\[
 \pi_Z(x,B):=\int_{S} \one_B\big(\overline g(x,y)\big)\,\pi(x,\d y)\quad \text{for all }x\in \R\text{ and }B\in \cB,
\] one finds that $Z$ is PRD on $X$ and $g(X,Y)=X+Z$.\

We may therefore w.l.o.g.\ assume that $S=\R$, $\mathscr S=\cB$, and $g(x,y)=x+y$ for all $x,y\in \R$ with $Y=Z$.\ Since $Y$ is PRD on $X$, the right-continuous conditional quantile
$$
f(x,u):= \sup\big\{a\in \R\colon \pi\big(x,(-\infty,a]\big)\leq u\big\}\quad\text{for all }x\in \R\text{ and }u\in (0,1)
$$ 
is nondecreasing in the first argument.\ Moreover, the function $f$ is Borel measurable since $$\big\{(x,u)\in \R\times (0,1)\colon f(x,u)\leq b\big\}=\bigcap_{\substack{a\in \Q\\ a>b}}\Big\{(x,u)\in \R\times (0,1)\colon \pi\big(x,(-\infty,a]\big)> u\Big\}$$ for all $b\in \R$.\ By potentially passing to a different probability space, we may w.l.o.g.\ assume that there exists a random variable $U$ that is uniform on $(0,1)$ and independent of $X$, so that
\begin{equation}\label{eq.deq}
(X,Y) \deq \big(X,f(X,U)\big).
\end{equation}
Observe that, for fixed $u\in (0,1)$ and $a,b\in \R$ with $a<b$, the set
\[
I_{a,b}(u):=\big\{x\in \R\colon x+f(x,u)\in (a,b]\big\}
\]
is an interval (possibly empty or a singleton) of length at most $b-a$, since $f(\,\cdot\,,u)$ is nondecreasing. Indeed, let $u\in (0,1)$, $a,b\in \R$ with $a<b$, and $x_1,x_2\in I_{a,b}(u)$ with $x_1\leq x_2$.\ Then,
\begin{align*}
x_2-x_1&\leq x_2-x_1+f(x_2,u)-f(x_1,u)= \big(x_2 +f(x_2,u)\big) - \big(x_1+f(x_1,u)\big)\leq b-a.
\end{align*}
Now, let $\ep>0$ and $\la$ denote the Lebesgue measure on $\cB$.\ Since $F_X$ is absolutely continuous, there exists some $\de>0$ such that
\[
\int_\R \one_B(x)\, F_X(\d x)<\ep\quad \text{for all } B\in \cB\text{ with }\la(B)<\de.
\]
Let $n\in \N$, $a_{n+1}:=\infty$, and $a_k,b_k\in \R$ with $a_k< b_k\leq a_{k+1}$ for all $k=1,\ldots, n$ and $\sum_{k=1}^n{b_k-a_k}<\de$.\ Then, by the previous step and since preimages of pairwise disjoint sets are pairwise disjoint,
$$
\lambda \bigg(\bigcup_{k=1}^n I_{a_k,b_k}(u)\bigg)= \sum_{k=1}^n \la\big(I_{a_k,b_k}(u)\big)\leq \sum_{k=1}^n b_k-a_k<\de.
$$
Hence, by \eqref{eq.deq} and Tonelli's theorem,
\begin{align*}
\sum_{k=1}^n F_{X+Y}(b_k)-F_{X+Y}(a_k)&=\sum_{k=1}^n F_{X+f(X,U)}(b_k)-F_{X+f(X,U)}(a_k)\\
&=\sum_{k=1}^n\int_0^1 \int_\R \one_{I_{a_k,b_k}(u)}(x)\,F_X(\d x)\, \d u\\
&= \int_0^1\int_\R \one_{\bigcup_{k=1}^nI_{a_k,b_k}(u)}(x)\, F_X(\d x)\, \d u<\ep.
\end{align*}
This shows that $F_{X+Y}$ is absolutely continuous, and the proof is complete.
\end{proof}

\begin{corollary}[PRD regularization]\label{cor.regularization}
Let $\ep>0$ and let $a\colon \R^d\to\R$ be nondecreasing and Borel measurable.\ If $F_X$ is absolutely continuous and $Y$ is PRD on $X$, then
\[
        \ep X + a(Y)
\]
has an absolutely continuous distribution function.\ In particular, $\ep X+Y_1+\cdots+Y_d$ has an absolutely continuous distribution function.
\end{corollary}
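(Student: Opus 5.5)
Corollary \ref{cor.regularization} is a direct specialization of Theorem \ref{thm.main}, so I only need to check its hypotheses. Take $S=\R^d$ with the Borel $\si$-algebra and the coordinatewise order $\precsim$, which is reflexive. Define
\[
g(x,y):=\ep x+a(y)\quad\text{for all }x\in\R,\ y\in\R^d,
\qquad
h(x):=\ep x\quad\text{for all }x\in\R.
\]
Then $g$ is $\cB\otimes\cB(\R^d)$-measurable, because $a$ is Borel. The map $h$ is nondecreasing and bijective, and its inverse $x\mapsto x/\ep$ is Lipschitz and hence absolutely continuous.

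Next I verify \eqref{eq.aggregation.property}. Let $x_1\leq x_2$ and $y_1\precsim y_2$. Since $a$ is nondecreasing, $a(y_2)\ge a(y_1)$, and therefore
\[
g(x_2,y_2)-g(x_1,y_1)=\ep(x_2-x_1)+a(y_2)-a(y_1)\geq h(x_2)-h(x_1).
\]
The remaining hypotheses, that $F_X$ is absolutely continuous and that $Y$ is PRD on $X$, are assumed in the corollary. Theorem \ref{thm.main} therefore shows that $F_{\ep X+a(Y)}$ is absolutely continuous.

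For the last claim, take $\ep=1$ and $a(y):=y_1+\cdots+y_d$. This $a$ is continuous, hence Borel, and it is nondecreasing in the coordinatewise order. The statement as written has a general $\ep$ in the first claim; reading the ``in particular'' part with the same $\ep$, i.e.\ for $\ep X+Y_1+\cdots+Y_d$, the argument is identical with $a(y)=y_1+\cdots+y_d$ and the given $\ep$.

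No step here is a real obstacle. The only points needing care are that $x\mapsto x/\ep$ is absolutely continuous, which holds because it is Lipschitz, and that the monotonicity of $a$ is taken with respect to the same coordinatewise relation used in the PRD definition.
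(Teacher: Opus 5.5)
Your proposal is correct and follows the paper's own proof exactly: apply Theorem~\ref{thm.main} with $S=\R^d$ under the coordinatewise order, $g(x,y)=\ep x+a(y)$ and $h(x)=\ep x$, whose inverse is Lipschitz and hence absolutely continuous. Your hesitation about setting $\ep=1$ in the last claim is unnecessary, since taking $a(y)=y_1+\cdots+y_d$ with the given $\ep$ settles it immediately.
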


\begin{proof}
This is a direct consequence of Theorem \ref{thm.main} for the functions $g(x,y)=\ep x+a(y)$ and $h(x)=\ep x$ for $x\in \R$ and $y\in \R^d$.
\end{proof}

The previous corollary can be understood as a smoothing principle. A small absolutely continuous perturbation of the form $\ep X$ regularizes any aggregate $a(Y)$ of a random vector $Y$ with a nondecreasing Borel measurable aggregation function $a\colon \R^d\to \R$, even when $a(Y)$ is discrete or singular, as long as the conditional distribution of $Y$ moves upward in stochastic order with $X$, i.e., $Y$ is PRD on $X$.\ In the independent case, if $X$ has a density $f_X$, the density of $\ep X+a(Y)$ is given by 
 \[
    f_{\ep X+a(Y)}(z)  =  \int_{\R^d} \frac1\ep f_X\bigg(\frac{z-a(y)}{\ep}\bigg)\, \P\circ Y^{-1}(\d y)\quad \text{for }z\in \R.
 \]
  Corollary \ref{cor.regularization} extends this familiar smoothing mechanism beyond independence.\ We point out that, in a financial context, the existence of an independent random variable is often rather restrictive, whereas the existence of a systemic factor on which all other losses depend positively is rather typical.\ This is supported by empirical evidence, indicating that, in times of stress, losses tend to behave in a comonotonic way, cf.\ \cite{DasUppal2004, ContKokholm2013, ContWagalath2016}.

We illustrate the PRD regularization for the Cantor distribution in the comonotonic case.

\begin{example}
 Let $ S=\R$, $\mathscr S=\cB$, $\ep>0$, $X$ be uniform on $(0,1)$, and $Y$ be Cantor distributed.\ If $Y$ is PRD on $X$, then $\ep X+Y$ has a density. If $X$ and $Y$ are even comonotonic, it follows that
 \[
 F_{\ep X+Y}=\big(\ep\cdot {\rm id}+c^{-1}\big)^{-1}
 \]
 is absolutely continuous, where $c$ is the Cantor function and $c^{-1}$ denotes the, say, right-continuous quantile function of $c$, despite the fact that $\ep\cdot {\rm id}+c$ is \textit{not} absolutely continuous.
\end{example}

We conclude with the following alternative formulation of Theorem \ref{thm.main} in the language of optimal transport.\ To that end, let $\mu$ and $\nu$ be probability measures on $\cB$ and $\mathscr S$, respectively.\ We say that a coupling $\gamma \in \cpl(\mu,\nu)$ is PRD on the first coordinate if there exists a stochastic kernel (\textit{disintegration} w.r.t.\ the first coordinate) $\pi\colon \R\times \mathscr S\to [0,1]$ such that
\[
\int_{\R\times S} g(x,y)\, \gamma(\d x,\d y)= \int_\R\int_S g(x,y)\, \pi(x,\d y)\, \mu(\d x)
\]
 for all $\cB\otimes \mathscr S$-measurable maps $g\colon \R\times S\to \R$ and \eqref{eq.def.prd} holds.\ We refer to \cite{zbMATH05306371,zbMATH05233008} for an introduction on optimal transport and couplings of probability measures.

\begin{corollary}\label{thm.main.alternative}
 Let $\mu$ and $\nu$ be probability measures, defined on $\cB$ and $\mathscr S$, respectively, $\gamma\in \cpl(\mu,\nu)$ be PRD on the first coordinate, and assume that $\mu$ has a density w.r.t.\ the Lebesgue measure on $\cB$.\ Then, for every Borel measurable function $g\colon \R\times S\to \R$ that satisfies \eqref{eq.aggregation.property} with $h\colon \R\to \R$ as in Theorem \ref{thm.main}, $\gamma\circ g^{-1}$ has a density w.r.t.\ the Lebesgue measure on $\cB$.
\end{corollary}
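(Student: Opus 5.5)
The plan is to reduce Corollary \ref{thm.main.alternative} to Theorem \ref{thm.main} by realizing the coupling $\gamma$ as the joint law of a pair of random elements. Take the canonical probability space $(\Om,\cF,\P):=(\R\times S,\cB\otimes\mathscr S,\gamma)$. Let $X(x,y):=x$ and $Y(x,y):=y$ be the coordinate projections. Then $X$ is a random variable with law $\mu$, and $Y$ is $\cF$-$\mathscr S$-measurable with law $\nu$. Since $\mu$ has a density with respect to the Lebesgue measure, $F_X$ is absolutely continuous.

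Next I would check that $Y$ is PRD on $X$ in the sense of \eqref{eq.kernelrep} and \eqref{eq.def.prd}. Let $\pi$ be the disintegration kernel from the definition of ``$\gamma$ is PRD on the first coordinate''. For every bounded $\cB\otimes\mathscr S$-measurable $g$,
\[
\E\big[g(X,Y)\big]=\int_{\R\times S} g(x,y)\,\gamma(\d x,\d y)=\int_\R\int_S g(x,y)\,\pi(x,\d y)\,\mu(\d x),
\]
and $\mu(\d x)=F_X(\d x)$, so \eqref{eq.kernelrep} holds. Condition \eqref{eq.def.prd} is part of the hypothesis on $\pi$, so $Y$ is PRD on $X$.

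Now take $g$ satisfying \eqref{eq.aggregation.property} with $h$ as in Theorem \ref{thm.main}. The theorem gives that $F_{g(X,Y)}$ is absolutely continuous. The law of $g(X,Y)$ under $\P=\gamma$ is exactly $\gamma\circ g^{-1}$, since $g(X,Y)=g$ as a map on $\R\times S$. An absolutely continuous distribution function corresponds to a measure with a Lebesgue density, which is the final assertion of Theorem \ref{thm.main}. So $\gamma\circ g^{-1}$ has a density with respect to the Lebesgue measure on $\cB$.

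There is no real obstacle; the step that needs care is the bookkeeping in the PRD verification. The corollary's definition requires the disintegration identity for all measurable $g$, while \eqref{eq.kernelrep} only asks for bounded ones, so the corollary's hypothesis implies what is needed. The kernel $\pi$ must be used exactly as given, with no a.s.\ modification, so that the monotonicity \eqref{eq.def.prd} holds for all $x_1\le x_2$.
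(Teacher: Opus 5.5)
Your proposal is correct and is the intended argument: the paper gives no separate proof and presents the corollary as a reformulation of Theorem \ref{thm.main}, which is justified exactly by your realization of $\gamma$ as the joint law of the coordinate projections on $(\R\times S,\cB\otimes\mathscr S,\gamma)$. Your check that the disintegration identity specializes to \eqref{eq.kernelrep}, with the kernel used as given so that \eqref{eq.def.prd} holds, is all that is needed.
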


%\bibliographystyle{abbrv}
%\bibliography{references}

\end{document}